\documentclass{article}
\usepackage{authblk}
\usepackage{graphicx} 
\usepackage{babel}
\usepackage{amsmath}
\usepackage{amsfonts}
\usepackage{amsthm}
\usepackage{amssymb}
\usepackage{hyperref}
\usepackage{enumitem}
\usepackage{cases}

\newtheorem{thm}{Theorem}[section]

\newtheorem{cor}[thm]{Corollary}

\theoremstyle{definition}
\newtheorem{defn}[thm]{Definition}

\title{On a Digital Signature Based on Matrix Decomposition and the MinRank Problem}
\author{Miguel Ángel López\thanks{migulo23@ucm.es}}
\affil{Complutense University of Madrid}
\date{}

\begin{document}

\maketitle

\begin{abstract}
$x\cdot 3$ The following article attempts to define a new digital signature scheme based on a variant of the MinRank problem under appropriate parameters. First, the key generation process will be defined, followed by the message signing and verification processes. The speed of signing and verification will be analyzed, and finally, the signature will be evaluated in terms of security and robustness against known attacks, which will lead to a set of requirements that we will consider when selecting certain parameters related to the size and number of variables. It should be noted that this signature, although simple in design, can serve as the foundation for an entire family of digital signatures based on the central idea that inspired its creation.
\end{abstract}

\section{Digital Signature Key Generation}

From now on, we will assume that we are working, as is customary in crypto\-gra\-phy, in a finite field $\mathbb{F}_q$. These fields can be of the form $\mathbb{Z}_p$, where $p$ is a prime number, or have a number of elements equal to $q=p^k$, where $k\in\mathbb{N}$.

Let $m$ and $n$ be two integers in $\mathbb{N}$. We choose two $n \times n$ matrices $A$ and $B$ over $\mathbb{F}_q$ that satisfy the following conditions:

\begin{enumerate}
\item $A$ and $B$ are circulant matrices, that is, matrices in which each row is the next one shifted cyclically by one to the right. In our case, they correspond to the general form
\begin{equation}
    \begin{pmatrix}
    a_0 & a_1 & \ldots & a_{n-1} \\ 
    a_{n-1} & a_0 & \ldots & a_{n-2}  \\
    \vdots & & \ddots & \vdots \\ 
    a_1 & a_2 & \ldots & a_0 
    \end{pmatrix}
\end{equation}
\item All elements in the first row of $A$ and $B$ (and therefore in both matrices) are distinct polynomials of degree $1$ in the variables $x_1\ldots x_m$, that is, $a_i=a_i (x_1\ldots x_m)\in\mathbb{F}_q[x_1\ldots x_m]$, where $deg(a_i)=1$, $1\leq i\leq n$.
\end{enumerate}

We construct $n$ matrices $S_i$ of size $n\times n$ over $\mathbb{F}_q$ as follows:

\begin{enumerate}
\item We generate $n$ circulant matrices $\tilde{S_i}$ of size $n\times n$.
\item We choose $p$ rows between $1$ and $n-2$ and replace, in all the $\tilde{S_i}$ matrices, the rows at the positions of the $p$ values with new rows containing values chosen at random over $\mathbb{F}_q$. Each new row must be different for each $\tilde{S_i}$, and thus we have generated $p\cdot n$ new rows in total. We will call these rows “intruder rows” and rename the matrices as $\hat{S_i}$.
\item We impose the condition that each of these $\hat{S_i}$ has a nonzero determinant; otherwise, we modify the matrices $\hat{S_i}$ that do not satisfy this condition with new intruder rows until they do. It must also hold that the $n\times n$ matrix $C$, formed by the first non-intrusive row of each $\hat{S_i}$, which we will call the circulating row, must also have a nonzero determinant. Once these two conditions are satisfied, we rename all the matrices as $S_i$, $1\leq i\leq n$.
\end{enumerate}

Each of these matrices satisfies the condition that the submatrix resulting from removing the intruding rows contains rows created by cyclically shifting the circulant row, but none of the $S_i$ matrices is actually a circulant due to the intruding rows that have been introduced.

\textbf{Public key:} $P_i(x_1\ldots x_m)=A\cdot S_i\cdot B$, $1\leq i\leq n$. It consists of $n$ $n\times n$ matrices whose entries are quadratic polynomials in the variables $x_1\ldots x_m$. We also publicly declare the total number $p$ of intruder rows. This value will be the rank used to verify the signature, and when it is more clear, we will use the notation $r=p$.

The \textbf{length of the public key} is $$\frac{n^3(m+1)(m+2)}{2}$$ and depends on the size $n$ chosen for the matrices and the number $m$ of variables. This value will be the range used to verify the signature, and when it is more clear, we will use the notation $r=p$.

\textbf{Private key:} Although for the sake of brevity we can say that the private key is $A,B,S_i$, where $1\leq i\leq n$, in reality it is sufficient to store the two matrices $A$ and $B$ and a single matrix $S$ consisting of the first non-intrusive row of each matrix $S_i$, where $1\leq i\leq n$.

This greatly reduces the \textbf{length of the private key}, which is $$n(2m+n+2),$$

much shorter than that of the public key.

\section{Signing Process}

Given a message $M$, we use auxiliary functions (which include hash functions and padding) to convert it into a fixed-length vector $\vec{d}\in\mathbb{F}_q$. From this, we generate a circulant matrix $D$ whose first row is $\vec{d}$. If $\left | D \right |=0$, we modify $\vec{d}$ as many times as necessary until $\left | D \right |\neq 0$.

We choose $x_1\ldots x_m$ at random and substitute them into $A$ and $B$ to obtain $\tilde{A}$ and $\tilde{B}$, which are $n\times n$ matrices over $\mathbb{F}_q$. If $\left | \tilde{A} \right |=0$ or $\left | \tilde{B} \right |=0$, we choose other variables.

We compute $\tilde{D}=\tilde{A}^{-1}\cdot D\cdot\tilde{B}^{-1}$, which will be circulant and full-rank by construction.

We solve the MinRank problem by considering the matrices $-\tilde{D}$ and the matrices $S_i$, where $1\leq i\leq n$. If we denote the position of the circulant row of each $S_i$ as $j_0$, this is equivalent to finding the solution to the following linear system:

\begin{equation}
    \left(
    \begin{array}{ccc|c}
    s_{j_0,1}^{(1)} & \ldots & s_{j_0,1}^{(n)} & -\tilde{d}_{j_0,1}\\
    \vdots & & \vdots & \vdots \\ 
    s_{j_0,n}^{(1)} & \ldots & s_{j_0,n}^{(n)} & -\tilde{d}_{j_0,n}
    \end{array}
    \right)
\end{equation}

Due to the third condition we imposed on the matrices $S_i$, this system is compatible determined and therefore has a unique solution $\vec{g}\in\mathbb{F}_q^n$.

The signature consists of the message $M$, the functions needed to construct $\vec{d}$, the variables $x_1\ldots x_m$, and the vector $\vec{g}\in\mathbb{F}_q$.

\section{Signature Verification}

$M$ is transformed into $\vec{d}$ using the submitted functions, and $D$ is constructed as in the signing process. The matrices $$\hat{P}_i=P_i(x_1\ldots x_m)$$ are computed for $1\leq i\leq n$. If the determinant of any of these matrices is zero, the signature is rejected. The matrix $$G=D+\sum_{i=1}^n g_i\cdot\hat{P}_i$$ is computed, and the rank of $G$ is then calculated. If it is equal to or less than $r$, the signature is accepted. Otherwise, it is rejected.

It is worth noting that the main premise underpinning the usefulness of the signature lies in the fact that the generalized MinRank problem is NP-complete when defined over a finite field, as can be seen in \cite{buss1999computational}.

\section{Speed of the Signing and Verification Process}

Given that having to repeat the selection of the vector $\vec{d}$ and the variables $x_1\ldots x_m$ does not significantly affect the number of steps in the algorithm, the complexity would be as follows:

\begin{itemize}
    \item Step 1, generating $D$ from $\vec{d}$, has a complexity of $O(n^2)$.
    \item Choosing $m$ variables at random has a complexity of $O(m)$. Substituting them has a complexity of $O(n\cdot m+n^2)$. Verifying their suitability (that the determinants of $\tilde{A}$ and $\tilde{B}$ are nonzero, and repeating the process if they are not) will not add any additional complexity to the process, since, as we will see later, we will choose the parameters so that the probability of them being valid after a small number of iterations is high. From this point on, the complexity no longer depends on $m$.
    \item The complexity of computing $\tilde{D}=\tilde{A}^{-1}\cdot D\cdot\tilde{B}^{-1}$ is $(2n^w+2n^w)$, since it involves two inversions and two products of $n\times n$ matrices, and is therefore $O(n^w)$, where $w$ is the constant from linear algebra, $2<w<3$.
    The next step is to solve an $n\times n$ linear system, which we know is consistent by construction and has a runtime of $O(n^w)$. This allows us to find the vector $\vec{g}\in\mathbb{F}_q^n$.
\end{itemize}

The complexity is therefore $O(\max(n^w, n \cdot m + n^2))$. In practical examples, we will aim for $m \leq n$, so with the chosen parameters, our actual complexity for the signing process will be $O(n^w)$.

For signature verification, we need to evaluate all the matrices $P_i(x_1\ldots x_m)$, for a total of $n$ matrices of size $n\times n$. In total, there are $n^3$ elements, where each element is a quadratic polynomial in $m$ variables. Evaluating each quadratic polynomial is equivalent to multiplying a $1\times (m+1)$ row matrix by a $(m+1)\times (m+1)$ square matrix by a $(m+1)\times 1$ column matrix. The complexity of this product is equal to $$O((m+1)^2+m+1)=O(m^2+2m+1+m+1)=O(m^2)$$

The complexity of the evaluation is $O(m^2 n^3)$, and from this point on, it no longer depends on $m$.

The complexity of determining whether an $n\times n$ matrix is full-rank is $O(n^w)$, and since we must check this for $n$ matrices, the total complexity is $O(n^{w+1})$.

The computation of $G$ has a complexity of $O(n^3)$ for performing all the multiplications and $O(n^3)$ for summing the $n^2$ values a total of $n$ times. The final sum over the matrix $D$ can be considered constant for the complexity, and therefore the overall complexity for computing $G$ is $O(n^3)$.

To find the rank of $G$, we perform standard Gaussian elimination, which has a complexity of $O(n^w)$.

The complexity of the signature verification, therefore, will be $$O(\max (m^2 n^3, n^{w+1}))$$ For practical purposes, we will aim for $m \leq n$, so with the chosen parameters, our actual complexity will be $O(\max(n^5, n^{w+1}))$.

Both processes, signing and verification, will therefore be polynomial in the worst case, even if we cannot restrict the values of $m$.

\section{Parameter Selection and Security Analysis of the Digital Signature}

From here on, we will assume that for verification we require a range less than or equal to $r$ and, therefore, we have designed the signature by introducing exactly $p=r$ intruder rows. We want to fix the values of $r, q$ (the size of the finite field), $n$ (the number and size of the matrices $S_i$), and $m$ (the number of variables $x_j$) to obtain a signature that meets certain security standards.

\subsection{Swapping Rows and Columns Doesn't Enhance Se\-cu\-ri\-ty}

It is possible to design the digital signature without having to publicly declare whether the design was based on rows or columns. In the event of a brute-force attack, however, this would only double the number of possible combinations of different choices, so the complexity of any attack, if it is $O(f(n))$, would become $O(2f(n))$, which does not strengthen security.

\subsection{Avoiding Variable Collisions}

If the same variables $x_1\ldots x_m$ are used twice, the signature acquires linear properties. The probability that the signature owner, when choosing two sets of variables at random, will select the same set of variables is $\frac{1}{q^m}$. We must therefore choose $q$ and $m$ such that this probability is as small as we wish.

\subsection{Avoiding Diagonalization of Matrices $A$ and $B$}

The matrices can be diagonalized under certain conditions using a discrete Fourier transform of size $n$ that is of the form

\begin{equation}
DFT(n)=
    \begin{pmatrix}
       1 & 1 & \ldots & 1 \\
       1 & \alpha & \ldots & \alpha^{n-1} \\
       \vdots & \vdots & \ddots & \vdots \\
       1 & \alpha^{n-1} & \ldots & \alpha^{(n-1)(n-1)}
    \end{pmatrix}
\end{equation}

where $\alpha$ is a primitive $n$th root of unity in $\mathbb{F}_q$. If this transformation exists, it is generally not unique.

We do not want to allow the possible use of this tool in $\mathbb{F}_q$, so we will prevent the existence of $n$th-root of unity in this finite field. For $n$th-roots of unity to exist, it must be the case that $n | (q-1)$, as explained in \cite{petersen2011use}. There would in fact be $n$ such roots, and it would suffice to choose a primitive root of unity (which in fact exists if and only if $n|(q-1)$ and, moreover, there are $\varphi(n)$ of them in total) to generate a $DFT(n)$. For example, for $n=3$ and $q=7$, there are three cubic roots of unity in $\mathbb{Z}_7$, which are $1$, $2$, and $4$. Both $2$ and $4$ are primitive, and if we choose, for example, the first one, $DFT(n)=\begin{pmatrix}
       1 & 1 & 1 \\
       1 & 2 & 4 \\
       1 & 4 & 2       
    \end{pmatrix}$. Therefore, we impose the condition that $n\not|(q-1)$.

\subsection{Choosing the parameter $r$}

If we choose to add $p$ intruder rows, the total number of possible positions for these rows is equal to ${n}\choose{p}$, which reaches its maximum when $p=\lceil{\frac{p}{2}}\rceil$. As we have already noted, considering columns instead of rows is irrelevant, since it would only double the possibilities, which is negligible in terms of attack complexity. To attack this signature using brute force or methods that require assuming the initial structure of the $S_i$, an attacker must multiply their computational effort by a combinatorial factor a total of ${n}\choose{p}$ times, and this is indeed significant in terms of complexity.

The maximum of these values occurs in the so-called central binomial coe\-ffi\-cient, which is equal to ${2n}\choose{n}$, but since we also want to consider it for odd rows in general, we will write it as $${{n}\choose{\lceil{\frac{n}{2}}\rceil}}, n \in \mathbb{N}$$

It can be verified that

$${n\choose\lceil{\frac{n}{2}}\rceil}=O\left( \frac{4^n}{\sqrt{n}}\right),$$

which accurately reflects the added difficulty of having to test the positions of all these new intruding rows, multiplied by a growth rate that is practically exponential in $n$. Specifically, according to \cite{luke1969special}, the binomial coefficient $2n\choose n$ exhibits the following asymptotic behavior:

$${2n\choose n}=\frac{4^n}{\sqrt{\pi n}}\left(1-\frac{1}{8n}+\frac{1}{128n^2}+\frac{5}{1024n^3}+O\left(n^{-4}\right)\right)$$

These numbers are related to Catalan numbers through the follo\-wing for\-mu\-la:

$$C_n=\frac{1}{n+1}{2n\choose n}$$

The asymptotic bound in the case of ${n\choose{\lceil{\frac{n}{2}\rceil}}}$, where $n$ is odd, is analogous, given that

$${{2n-1}\choose{\lceil{\frac{2n-1}{2}\rceil}}}=\frac{1}{2}{2n\choose n}$$

In terms of robustness, therefore, the closer we get to $r=p=\lceil{\frac{p}{2}}\rceil$, the more operations will be required to attack the signature using brute force or methods derived from the internal structure of the $S_i$, but as we will see later, unfortunately this value cannot be achieved exactly, and we will have to settle for getting as close as possible given certain additional conditions.

\subsection{Consider a well-determined MinRank problem}

The MinRank problem can be under-determined, well-determined, or over-determined. Of these, the well-determined problem is the most suitable for our firm for several reasons. In the case of an under-determined problem, \cite{faugere2010computing} explains that this class of problem can always be reduced to a well-determined one for theoretical purposes, so we gain nothing in terms of additional security. Furthermore, if we underdetermine the problem, for example, by altering the number of matrices, our linear system could become inconsistent; in fact, this could happen even if we forced it to be well-determined. The fact that the associated MinRank has a single solution does not necessarily imply that it must be the solution to our linear system.

On the other hand, the over-determined case largely lacks theoretical results regarding the attack on the problem using standard methods, which are Minors Modeling, KS Modeling, and Linear Algebra Search. In \cite{caminata2021complexity}, upper bounds on the solving degree are discussed for Minors Modeling in the under-determined and well-determined cases, but it is specified that analogous results are lacking for the over-determined case.

We therefore want to ensure that the problem is well-determined in order to minimize the chances that someone will create a different signature for the same set of variables, but we cannot do so by altering the number of matrices. We will therefore do so using the value $r$.

Ensuring that the problem is well-determined is not straightforward, but the paper \cite{courtois2001efficient} demonstrates (p. 15, A. Probability Distribution of Ranks) that for a given MinRank problem involving $m$ $\eta\times n$ matrices with a required rank of $r$, the closer $m$ is to the value $m_{max}$ where

$$m_{max}=\eta n+r^2-(\eta+n)r+1,$$

the more likely it is, on average, that there exists a solution and exactly one solution. In our case, we have the constraints that $\eta = n$ (i.e., the matrices are square) and that $m = n$ (the article defines the MinRank problem in a homogeneous manner and therefore considers one fewer matrix), and solving for $r$ yields $r = n - \sqrt{n - 1}.$

If we try to impose the condition that $r=\lceil{\frac{n}{2}}\rceil$ at the same time, we find that we have to solve the system

$$\begin{cases}
    r=n-\sqrt{n-1} \\
    r=\lceil{\frac{n}{2}}\rceil
\end{cases}
$$

which yields as the only natural solutions

$$(n,r)\in\left\{(1,1),(2,1),(5,3)\right\},$$

which are either nonsensical solutions, given that $n$ is the size of a square matrix, or are too small to be considered useful in practice.

There are two ways to reconcile these two requirements for the rank. One is to modify the number $k$ of matrices so that the equation

$$k=\eta n+r^2+(\eta+n)r+1,$$

is satisfied, given that $r=\lceil\frac{n}{2}\rceil$ and $\eta=n$, in which case it would be

$$k=(n-r)^2+1=\begin{cases}
    \left(\frac{n}{2}\right)^2+1 & \text{if $n$ even} \\
    \left(\frac{(n-1)}{2}\right)^2+1 & \text{if $n$ odd}
\end{cases}$$

We have already stated that we had no intention of making such a modifica\-tion, and here you can see why this would cause several problems. If, for example, $n=5$, then $k=5$ and $r=3$, which would leave us with three extra rows for a signature on five $5\times5$ matrices. This would mean that during the signing process, we would have to solve a linear system of five equations with four unknowns (the fifth matrix serves to define the right-hand side of the system), which in the vast majority of cases would be inconsistent. On average, there would be a single solution, but it would not be the one we construct during the signing process; it would be another one that we would not be able to find.

If, on the other hand, we take $n=20$, we have $k=101$ and $r=10$, and with our signature design, the linear system to be solved would be consistent but indeterminate with many degrees of freedom, leading us to conclude that the matrices we would have designed are not in the mean.

It is therefore not advisable to change the number of matrices, due to the risk that we may not be able to solve the linear system or that it may have an excessive number of solutions—the existence of which, moreover, offers us no additional benefit, since to sign the same document twice we would choose different random variables and, therefore, would have to solve another linear system rather than use the same one again. The most appropriate approach is that, given a set of variables, the signature be unique to them, since we will require that there be no collision among the variables, thereby preventing, as much as possible, the same document—with the same variables—from being forged with a different signature.

If, therefore, we assume that we cannot solve the system $$\begin{cases}
    r=n-\sqrt{n-1} \\
    r=\lceil{\frac{n}{2}}\rceil
\end{cases}$$ we work under the assumption that the equation we cannot ignore is the first one. Our new “intruding” terms are therefore $p=r=n-\sqrt{n-1}$, and there will be $${n\choose n-\sqrt{n-1}}={n\choose \sqrt{n-1}}$$ different ways to order them.

It can be easily verified experimentally that, if we define the function

$$f(n)={n\choose \lceil{\frac{n}{2}\rceil}}-{n\choose\sqrt{n-1}},$$

we find that $f(4)>0$ and $f$ is increasing; in fact, as the value of $n$ increases, the two candidate values for $r$ become further apart. The gap between these two values would therefore be smaller as the value of $n$ decreases, which we cannot assume since the value of $n$ must be large enough for the associated MinRank problem to be computationally intractable.

Assuming, therefore, that $r=n-\sqrt{n-1}$, we will calculate how many possibilities we add asymptotically as $n$ increases for the $p=r$ intruding rows thus added.

We have that $${n \choose\sqrt{n-1}}\thicksim\frac{(n+1)^{\sqrt{n-1}}}{\sqrt{n-1}!}$$ since for values of $n$ and $k$ that are sufficiently far apart, we have that ${n-1\choose k-1}$ is approximately $\frac{n^{k-1}}{(k-1)!} $. Making the substitution $u^2 = n - 1$, we find that the trend of the first expression is equivalent to considering that of $\frac{(u^2 + 2)^u}{u!}$, and to handle this expression we can use Stirling’s approximation, $u!\thicksim\sqrt{2\pi u}\left(\frac{u}{e}\right)^u$, which is equivalent to $$\frac{e^u}{\sqrt{2\pi u}}\left(u+\frac{2}{u}\right)^u.$$

This expression is of the order $O(e^{2u}(u-1)!)$ with $u=\sqrt{n-1}$, so for large values of $n$, we continue to add an unmanageable number of possibilities to brute-force attacks that rely on the internal structure of the $S_i$.

\subsection{Preventing a Direct Matrix Factorization Attack}

A potential attacker has access to the public key matrices, that is, the $P_i=A\cdot S_i\cdot B$, where $1\leq i\leq n$. The attacker wishes to obtain the matrices $A$, $S_i$, and $B$. Each $P_i$ has $n^2$ elements, and there are $n$ of them in total. Each element is a multivariate quadratic polynomial in $m$ variables; there are at most $\frac{(m+1)(m+2)}{2}$ nonzero coefficients, but to obtain the original values, even these must be equated. There are therefore $$e(n,m)=\frac{n^3(m+1)(m+2)}{2}$$ equations to solve; for the following discussion, we will not consider the equations over a finite field, since the inequalities mentioned below do not take them into account.

The unknowns are $n(r+1)$ values that define each $S_i$, the $n(m+1)$ values in the first row of $A$, and the same number of values for $B$; therefore, the total number of unknowns is equal to $$v(n,m,r) = n(n + rn + 2m + 2). $$ We also assume that $r=n-\sqrt{n-1}$, so we have that $$v(n,m)=n(n^2+n-n\sqrt{n-1}+2m+2).$$

In \cite{ding2006multivariate}, it is explained that, given a multivariate quadratic system with $v$ variables and $e$ equations, there exist polynomial-time algorithms for certain special cases, namely $v\geq \frac{e(e+3)}{2}$, $e\geq \frac{v(v+3)}{2}$, or $v=\lambda e$, where $\lambda\in\mathbb{N}$, and these correspond to systems in which the difference between the number of equations and the number of unknowns is very pronounced. These include algorithms such as Relinearization or the XL algorithm. The system we are solving is not quadratic (it is a non-homogeneous system of degree 3, but it can be converted to a homogeneous system without loss of generality, as considered in \cite{courtoisXL}); however, we will still impose these bounds to prevent future attacks that exploit the fact that the system might be too unbalanced between equations and variables.

It can be easily verified that for $m\in(4,\infty)$, all natural values of $n$ satisfy $e(n,m)>v(n,m)$. We can therefore ignore the cases $v\geq \frac{e(e+3)}{2}$, $v=\lambda e$, where $\lambda\in\mathbb{N}$. We are left to analyze the case $e\geq \frac{v(v+3)}{2}$, and to do so, we will derive the inequality associated with $e<\frac{v^2}{2}$, which is $$n^2(n^2+n-n\sqrt{n-1}+2m+2)^2-n^3(m+1)(m+2)>0.$$

This inequality is very lenient; for example, if $m=20$, any value of $n$ is acceptable, and the first value of $m$ for which not just any value of $n$ is acceptable is $m=58$, which requires $n\geq 8$.

\subsection{Avoiding a matrix factorization attack after choosing the variables}

A simpler approach to decomposing matrices is not to do so for all variables, but rather after selecting a group of them; that is, given matrices $P_i\in M_n (\mathbb{F}_q)$ of the form $P_i=A\cdot S_i\cdot B$, obtain the original matrices $A, S_i, B$, and thereby impersonate the owner of the signature for a specific group of variables, which would allow a message to be forged.

This attack is independent of the number of variables $m$, and, in theory, a variable-based attack would be more computationally expensive. It has been verified using Magma that even small values for matrices and rank, such as $n=15$ and $r=1$ (assuming the intruder row is in the first position), lead to systems of unmanageable sizes when solved using Gröbner bases and Faugère’s F4 algorithm. In general terms, the number of equations to be solved is $e(n)=n^3$, and the number of unknowns is $v(n,r)=2n+(r+1)n^2$. Again, we assume $r=n-\sqrt{n-1}$.

To rule out the case $v=\lambda e$, where $\lambda\in\mathbb{N}$, we must solve the Diophantine equation in two non-negative variables $$(1-\lambda)^2n^4+(1-2\lambda)n^3+(6-4\lambda)n^2+4n+4=0. $$ A direct inspection shows that if $\lambda \geq 5$, this equation has no real solutions, and if $0 < \lambda < 5$, all its real roots are less than $4$ and, furthermore, none of them is an integer.

If we impose $v < \frac{e(e+3)}{2}$, we see that this holds for all $n > 2$. The last case, $e<\frac{v(v+3)}{2}$, holds for every natural number $n$. On the other hand, the multivariable system that would result and need to be solved is a non-homogeneous system of degree 3 and is therefore generally intractable for large values of $n$.

\subsection{Ensuring the Existence of Sufficient Circulant and In\-ver\-ti\-ble Matrices in a Finite Field}

When generating digital signature keys and substituting the variables $x_1\cdots x_m$, we need to find circulant matrices of size $n$ over our field $\mathbb{F}_q$ that are invertible in just a few attempts. The total number of circulant matrices of size $n$ over $\mathbb{F}_q$ is $q^n$. How many of these are invertible?

Since every circulant matrix of size $n$ over $\mathbb{F}_q$ has an associated polynomial of degree $n-1$, and since circulant matrices form a ring isomorphic to $R\equiv\mathbb{F}_q\left[x\right]/\left\langle x^n-1\right\rangle$, it suffices to count the invertible ones over $R$. When $q$ and $n$ are coprime, $R$ is a direct product of fields, specifically $R\simeq F_1\times\ldots\times F_r$, where $F_j\simeq\mathbb{F}_q\left[x\right]/\left\langle f_j(x)\right\rangle$, where $f_1\ldots f_r$ are the irreducible factors of $x^n-1$ over $\mathbb{F}_q$. Then $R$ has $\prod^r_{j=1}\left( q^{d_j}-1\right)$ invertible elements, where $d_j$ is the degree of $f_j$.

This implies that the fewer factors $x^n-1$ has over $\mathbb{F}_q$, the fewer zero divisors $R$ will have and the more invertibles it will contain. Therefore, we must choose $n$ and $q$ such that this factorization is not too large. We must therefore study the following problem: given $\mathbb{F}_q$, a finite field of characteristic $p$, for what values of $n$ does the polynomial $x^n-1$ factor into the smallest possible number of factors? We know that this number of factors is at least $2$, since $x^n - 1 = (x - 1) \sum_{j=0}^{n-1} x_j$ in any polynomial ring $A[x]$.

To study this, we will need the concept of a cyclotomic polynomial in finite fields, which is simply a polynomial of fixed degree $n$ whose roots are the $n$th roots of unity in $\mathbb{F}_q$. These concepts are discussed in greater depth in \cite{Huczynska2010FiniteFields}.

\begin{defn}
    Given a finite field $\mathbb{F}_q$ of characteristic $p$ (where $p=0$ is allowed), given $n$ a positive integer not divisible by $p$, and $\xi$ an $n$th root of unity in $\mathbb{F}_q$, we define $$Q_n(x)=\prod_{s=1}^n\left (x-\xi^s\right)$$ as the $n$-cyclotomic polynomial over $\mathbb{F}_q$, which has degree equal to $\varphi(n)$, where $\varphi$ is Euler’s function.
\end{defn}

It can be shown that $x^n-1=\prod_{d \mid n}Q_d(x)$, and that all coefficients of $Q_n(x)$ belong to the prime subfield of $\mathbb{F}_q$, which is isomorphic to $\mathbb{Z}_p$ ($\mathbb{Z}$ in the case of the field $\mathbb{Q}$ of characteristic zero). This therefore implies that $$Q_n(x)=\frac{x^n-1}{\prod_{\substack{d \mid n \\ d<n}}Q_d(x)}$$

and in particular, we have that $Q_r(x)=1+x+x^2+\ldots x^{r-1}$ if $r$ is prime. We therefore have that, if $n$ is composite, then it has at least one prime divisor $u$ and therefore $Q_u(x)\mid \left(x^n-1\right)$, where $Q_u(x)=\sum_{j=1}^{u-1}u^j$ is a proper divisor of the ring of polynomials in one variable over the finite field $\mathbb{F}_q$. Therefore, for $x^n-1$ to factor into only two factors over $\mathbb{F}_q$, it is a necessary condition that $n$ be a prime number. This, however, is not a sufficient condition, since we also need $Q_n(x)$ to be irreducible over $\mathbb{F}_q [x]$, and this will not hold for every prime $n$.

At this point, we will also need the concept of order.

\begin{defn}
    Let $n$ be a positive integer, and $b$ an integer coprime to $n$. The smallest positive integer $k$ such that $n\mid(b^k-1),$ which is equivalent to $b^k\equiv 1\pmod{n},$ is called the multiplicative order of $b$ modulo $n$ and is denoted by $ord_n(b)=k$.
\end{defn}

We will use a very important theorem that states the following.

\begin{thm}
    Given a finite field $\mathbb{F}_q$, a positive integer $n$ such that $\gcd(q,n)=1$, and $d=ord_n(q)$, then $Q_n(x)$ factors into exactly $\varphi(n)/d$ distinct polynomials in $\mathbb{F}_q[x]$, all of degree $d$.
\end{thm}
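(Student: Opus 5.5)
The plan is to identify the roots of $Q_n(x)$ with the primitive $n$th roots of unity in a suitable extension of $\mathbb{F}_q$, and then read off the irreducible factors from the Frobenius orbits on these roots. Since $\gcd(q,n)=1$, the polynomial $x^n-1$ is separable over $\mathbb{F}_q$, because its derivative $nx^{n-1}$ has only the root $0$. So $x^n-1$ has $n$ distinct roots in a splitting field, and these form a cyclic group $\mu_n$ of order $n$. By the definition of $Q_n$ (read with $\xi$ a primitive root and $s$ ranging over exponents coprime to $n$, so that the degree is $\varphi(n)$), the roots of $Q_n$ are exactly the $\varphi(n)$ generators of $\mu_n$, each appearing once.

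First I will locate these roots. The multiplicative group of $\mathbb{F}_{q^k}$ is cyclic of order $q^k-1$, so it contains an element of order $n$ if and only if $n\mid q^k-1$. Hence the smallest extension of $\mathbb{F}_q$ containing a primitive $n$th root of unity $\xi$ is $\mathbb{F}_{q^d}$ with $d=ord_n(q)$. Since every root of $Q_n$ is a power of $\xi$, all of them lie in $\mathbb{F}_{q^d}$, and none lies in a smaller field $\mathbb{F}_{q^k}$ with $k<d$.

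Next I will take any root $\zeta$ of $Q_n$ and let $f$ be its minimal polynomial over $\mathbb{F}_q$. The degree of $f$ equals $[\mathbb{F}_q(\zeta):\mathbb{F}_q]$. The field $\mathbb{F}_q(\zeta)$ is some $\mathbb{F}_{q^k}$ containing an element of order $n$, and it sits inside $\mathbb{F}_{q^d}$. By the previous step this forces $k=d$, so $\deg f=d$.

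Finally, $Q_n$ has coefficients in the prime field, hence in $\mathbb{F}_q$, as stated earlier in the paper. Each minimal polynomial $f$ of a root therefore divides $Q_n$. Distinct irreducible monic polynomials are coprime, and $Q_n$ is separable, so $Q_n$ is the product of the distinct minimal polynomials of its roots. Each factor has degree $d$, so the number of factors is $\varphi(n)/d$, which is an integer. As a cross-check, the roots of $f$ are $\zeta,\zeta^q,\dots,\zeta^{q^{d-1}}$, which matches the orbit of $\zeta$ under $x\mapsto x^q$ acting on $(\mathbb{Z}/n\mathbb{Z})^*$; these orbits all have size $d$.

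The main obstacle is not a hard step but a matter of care. The paper's definition of $Q_n$ as written ranges over all $s$ from $1$ to $n$, and that must be reinterpreted so that $Q_n$ runs over primitive roots only. One also has to be explicit that the roots live in an extension field, not in $\mathbb{F}_q$ itself. With that settled, the key fact is that $\mathbb{F}_{q^k}^*$ is cyclic of order $q^k-1$, which gives the equivalence that $\mathbb{F}_{q^k}$ contains a primitive $n$th root if and only if $n\mid q^k-1$. That equivalence is exactly what pins the degree of every factor to $ord_n(q)$.
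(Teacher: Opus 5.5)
Your proof is correct. The paper does not prove this theorem; it quotes it as a known result from the finite-field literature (the cited Huczynska reference), so there is no in-paper argument to compare with. Your argument is the standard textbook proof: take minimal polynomials of the primitive $n$th roots of unity, which lie in $\mathbb{F}_{q^d}$, where $d=ord_n(q)$ is forced by the cyclicity of $\mathbb{F}_{q^k}^*$.

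You also rightly repair the definition the statement rests on. Taken literally, with $\xi$ primitive and $s$ running over $1,\dots,n$, the paper's product equals $x^n-1$, of degree $n$ rather than $\varphi(n)$. Moreover, $\xi$ cannot in general be taken in $\mathbb{F}_q$ itself; the paper's own later requirement $n\nmid q-1$ rules that out.

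The one step you outsource is $Q_n\in\mathbb{F}_q[x]$, which you take from the paper's unproved remark. Your own cross-check proves it directly:
\begin{itemize}
\item Because $\gcd(q,n)=1$, the Frobenius map $y\mapsto y^q$ sends primitive $n$th roots of unity to primitive $n$th roots of unity, so it permutes the roots of $Q_n$.
\item Applying it coefficientwise to $Q_n(x)=\prod(x-\zeta)$ therefore only reorders the factors.
\item Hence every coefficient $c$ satisfies $c^q=c$ and lies in $\mathbb{F}_q$.
\end{itemize}

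Phrase the cross-check as Frobenius acting by multiplication by $q$ on the exponents $s\in(\mathbb{Z}/n\mathbb{Z})^*$, rather than ``$x\mapsto x^q$ acting on $(\mathbb{Z}/n\mathbb{Z})^*$''. Then the orbits are the cosets of $\langle q\rangle$, each of size $d$. Stated this way, the orbit argument is by itself a complete, self-contained proof. The product of $x-\zeta^{q^i}$ over one orbit is the minimal polynomial of $\zeta$, and $Q_n$ is the product of these over the $\varphi(n)/d$ orbits.
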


Thanks to this theorem, we can prove the following:

\begin{cor}
    Let $\mathbb{F}_q$ be a finite field, $n$ a prime distinct from $p$ such that $q = p^k$ and $ord_n(q) = n - 1$. Then $Q_n(x)$ is irreducible over $\mathbb{F}_q[x]$.
\end{cor}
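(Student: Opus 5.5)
The plan is to obtain the corollary as a direct specialization of the preceding theorem, so the main work is checking that its hypotheses hold and then doing the arithmetic with Euler's function. First, since $q=p^k$ and $n$ is a prime distinct from $p$, the only prime divisor of $q$ is $p$, and $p\neq n$. Hence $\gcd(q,n)=1$, and $ord_n(q)$ is well defined. This also guarantees that $n$ is not divisible by the characteristic, so $Q_n(x)$ is defined as in the definition above.

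Next I would apply the theorem with $d=ord_n(q)=n-1$. It says that $Q_n(x)$ factors in $\mathbb{F}_q[x]$ into exactly $\varphi(n)/d$ distinct irreducible polynomials, each of degree $d$. Because $n$ is prime, $\varphi(n)=n-1$. The number of irreducible factors is therefore $(n-1)/(n-1)=1$, and that single factor has degree $n-1=\deg Q_n$. So $Q_n(x)$ is itself irreducible over $\mathbb{F}_q[x]$, up to a unit, which is trivial here since $Q_n$ is monic.

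There is no real obstacle. The only points needing care are confirming coprimality from $n\neq p$, and noting that in the theorem the factors are understood to be irreducible. If one worried about the latter, a backup argument is available. Take a root $\xi$ of $Q_n$ in an extension field; it is a primitive $n$th root of unity. The degree $[\mathbb{F}_q(\xi):\mathbb{F}_q]$ is the least $k$ with $\xi^{q^k}=\xi$, which is the least $k$ with $n\mid q^k-1$, namely $ord_n(q)=n-1$. So the minimal polynomial of $\xi$ has degree $n-1=\deg Q_n$ and divides $Q_n$, hence equals $Q_n$. I would also briefly remark that the hypothesis $ord_n(q)=n-1$ says exactly that $q$ is a primitive root modulo $n$.
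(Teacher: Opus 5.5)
Your proof is correct and matches the paper's argument: $\gcd(n,q)=1$ follows from $n\neq p$, and the theorem with $d=ord_n(q)=n-1=\varphi(n)$ gives exactly one factor. Your backup minimal-polynomial argument, $[\mathbb{F}_q(\xi):\mathbb{F}_q]=ord_n(q)$, is also sound, and it makes explicit that the factors are irreducible, which the paper's statement of the theorem leaves implicit.
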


\begin{proof}
    Since $n$ is prime and distinct from $p$, then $\gcd(n,q)=1$. By the previous theorem, $Q_n(x)$ factors into $\varphi(n)/ord_n(q)$ polynomials, but $\varphi(n)=n-1$ and $ord_n(q)=n-1$ by assumption, so $\varphi(n)/ord_n(q)=1$ and $Q_n(x)$ is irreducible over $\mathbb{F}_q[x]$.
\end{proof}

This implies, therefore, that for $x^n-1$ to factor into only two factors over $\mathbb{F}_q[x]$, it is a necessary condition that $n$ be a prime number distinct from the characteristic of the finite field and that, furthermore, $ord_n(q)=n-1$.

\begin{cor}
    It is also a sufficient condition; that is, given a finite field $\mathbb{F}_q$ and $n > 2$ such that $x^n - 1 = (x - 1) \sum_{j=1}^{n-1} x^j$ and has no other factors, then $n$ is prime, $\gcd(n, q) = 1$, and $ord_n(q) = n - 1$.
\end{cor}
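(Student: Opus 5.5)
We read the hypothesis as: $x^n-1$ factors over $\mathbb{F}_q$ as $(x-1)$ times a single irreducible factor of degree $n-1$. (The sum in the statement should be $\sum_{j=0}^{n-1}x^j$.) The plan is to derive the three conclusions in order: $\gcd(n,q)=1$ first, then primality of $n$, then $ord_n(q)=n-1$, which comes from the theorem on the factorization of $Q_n(x)$.

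\textbf{Coprimality.} Suppose $p\mid n$ and write $n=pn'$. By the Frobenius identity, $x^n-1=(x^{n'}-1)^p$. Every irreducible factor of $x^n-1$ would then occur with multiplicity at least $p\geq 2$. In particular $(x-1)^p$ divides $x^n-1$, so $(x-1)\mid \sum_{j=0}^{n-1}x^j$. This contradicts the assumption that the second factor is irreducible of degree $n-1>1$, which needs $n>2$. Hence $p\nmid n$, and so $\gcd(n,q)=1$. This is also what makes the earlier definition of $Q_d$ and the identity $x^n-1=\prod_{d\mid n}Q_d(x)$ available.

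\textbf{Primality.} Assume $n$ is composite and let $u$ be a prime divisor with $1<u<n$. Then $Q_1(x)=x-1$, $Q_u(x)$ and $Q_n(x)$ are distinct factors in $\prod_{d\mid n}Q_d(x)$, each of positive degree. So $x^n-1$ has at least three irreducible factors counted with multiplicity. The argument in the text preceding the previous corollary gives exactly this: $Q_u\mid x^n-1$ is a proper divisor different from $x-1$ and from the cofactor. This contradicts the hypothesis, so $n$ is prime.

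\textbf{Order.} Since $n$ is prime, $x^n-1=(x-1)Q_n(x)$, and the hypothesis says $Q_n(x)$ is irreducible. Let $d=ord_n(q)$. By the theorem, $Q_n(x)$ splits into exactly $\varphi(n)/d=(n-1)/d$ factors of degree $d$. Irreducibility forces $(n-1)/d=1$, that is, $d=n-1$.

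The main obstacle is the coprimality step. We must make sure that inseparability in characteristic $p$ really produces a repeated factor that breaks the hypothesis; the repeated factor $x-1$ handles this cleanly. The remaining steps are direct applications of the cyclotomic decomposition and of the theorem.
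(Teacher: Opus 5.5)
Your proof is correct and follows the paper's three-step argument: the cyclotomic decomposition gives primality, the Frobenius/inseparability identity gives coprimality, and the factorization theorem for $Q_n$ forces $ord_n(q)=n-1$; you also rightly correct the sum to $\sum_{j=0}^{n-1}x^j$. The one difference is that you prove $\gcd(n,q)=1$ first, via $(x-1)^p\mid x^n-1$ for any prime $p\mid n$, whereas the paper proves it after primality using $x^n-1=(x-1)^n$; your ordering means the identity $x^n-1=\prod_{d\mid n}Q_d(x)$, which the primality step relies on, is already legitimately available when you use it.
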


\begin{proof}
    We have already shown that if $n$ is composite, then $x^n - 1$ has more than two factors; therefore, if $x^n - 1$ has exactly two factors, $n$ is prime.

    If we assume that $\gcd(n,q) \not=  1$, then $\gcd(n,q)=n$ (since $n$ is prime); thus, $q=n^k$ and the characteristic of $\mathbb{F}_q$ is $n$. This implies that $x^n-1=(x-1)^n$, which always has more than two factors if $n>2$, which cannot be the case. Therefore, $\gcd(n,q)=1$.

    Finally, suppose that $ord_n(q) \neq n-1$ and let $ord_n(q) = d$. Since $n$ is prime, the multiplicative group of $\mathbb{Z}_n$ has $n-1$ elements, and by Lagrange’s theorem, the order $d$ must divide $n-1$. Since $n$ is prime, $Q_n(x)=\sum_{j=1}^{n-1}x^j$, and we know that it factors into $\varphi(n)/d$ distinct factors; since $d<n-1$, $\varphi(n)/d=(n-1)/d\geq 2$. Therefore, $x^n-1$ has at least three factors, which is impossible by assumption. Thus, $ord_n(q) = n-1$.
\end{proof}

\begin{cor}
    In a finite field of order $q$, if for our signature we choose a ma\-trix size $n$ such that $n > 2$ is prime, $\gcd(n, q) = 1$, and $ord_n(q) = n - 1$, the probability $P$ that a circulant matrix is invertible is
    
    $$P = \frac {(q^1-1)(q^{n-1}-1)}{q^n}=\frac{q-1}{q}-\frac{q-1}{q^n},$$
    
    so for enough large values of $n$ that satisfy the required conditions, this probability will be less than $(q-1)/q$, an efficient value for digital signatures.
\end{cor}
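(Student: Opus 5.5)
The plan is to count invertible circulant matrices through the ring isomorphism described above. Circulant $n\times n$ matrices over $\mathbb{F}_q$ correspond bijectively to elements of $R=\mathbb{F}_q[x]/\langle x^n-1\rangle$, and a matrix is invertible exactly when the corresponding element is a unit in $R$. Since there are $q^n$ circulant matrices in total, if $U$ denotes the number of units of $R$, the probability of invertibility for a uniformly random circulant matrix is $P=U/q^n$.

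First I determine the factorization of $x^n-1$. Because $n$ is prime, $\gcd(n,q)=1$ and $ord_n(q)=n-1$, the corollary above shows that $Q_n(x)=\sum_{j=0}^{n-1}x^j$ is irreducible over $\mathbb{F}_q$. Hence $x^n-1=(x-1)Q_n(x)$ with both factors irreducible, of degrees $d_1=1$ and $d_2=n-1$. They are distinct because $Q_n(1)=n\neq 0$ in $\mathbb{F}_q$, which holds since $\gcd(n,q)=1$. By the Chinese Remainder Theorem, $R\simeq \mathbb{F}_q\times\mathbb{F}_{q^{n-1}}$. An element of a product of fields is a unit exactly when each coordinate is nonzero, so $U=(q-1)(q^{n-1}-1)$, in agreement with the general formula $\prod_j(q^{d_j}-1)$ quoted earlier.

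The closed form then follows by expanding and dividing by $q^n$:
\[
\frac{(q-1)(q^{n-1}-1)}{q^n}=\frac{(q-1)q^{n-1}-(q-1)}{q^n}=\frac{q-1}{q}-\frac{q-1}{q^n}.
\]
For the final remark, the subtracted term $(q-1)/q^n$ is positive, so $P<(q-1)/q$. This term tends to $0$ as $n$ grows, so $P$ is close to $(q-1)/q$ for large admissible $n$. Consequently the expected number of attempts needed to find an invertible circulant matrix, which is $1/P$, is essentially $q/(q-1)$, only slightly above one attempt.

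The only delicate step is justifying that the two factors are coprime, so that the Chinese Remainder Theorem applies, and that invertibility of the matrix matches unit status in $R$. Both follow from $\gcd(n,q)=1$ and from the standard fact that the ring isomorphism preserves units. I would also reword the statement's phrase "less than $(q-1)/q$" as "approaching $(q-1)/q$ from below", since that is the meaningful claim for efficiency.
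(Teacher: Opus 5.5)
Your proposal is correct and is essentially the paper's argument: circulants are identified with $\mathbb{F}_q[x]/\langle x^n-1\rangle$, and the preceding corollaries give $x^n-1=(x-1)Q_n(x)$ with $Q_n$ irreducible of degree $n-1$. The units of $\mathbb{F}_q\times\mathbb{F}_{q^{n-1}}$ then number $(q-1)(q^{n-1}-1)$, and your check $Q_n(1)=n\neq 0$ makes explicit a coprimality that the paper leaves implicit.

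Two minor points. First, invertibility of the matrix coincides with being a unit in $R$ because the inverse of an invertible circulant is a polynomial in it (Cayley--Hamilton); that isomorphisms preserve units is not enough by itself. Second, your closing claim that $1/P\approx q/(q-1)$ is ``only slightly above one attempt'' fails for small $q$: it is about $2$ in the paper's example $q=2$, $n=19$.
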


For example, if $\mathbb{F}_q=\mathbb{Z}_2$, $n=19$, we have that $ord_{19}(2)=18$, so $x^{19}-1$ factors into two factors, which gives us a probability $P$ of obtaining an invertible circulant matrix of size $19$ over $\mathbb{Z}_2$ equal to $p\simeq 0.499998092$. Using the geometric distribution, we have that $$P[\text{invertible on the third attempt or earlier}] = p + (1 - p)p + (1 - p)^2p \simeq 0.874999,$$

which is a more than reasonable number of attempts to quickly obtain an invertible circulant matrix. In fact, the average number of total attempts required to achieve the first success is given by $E[X]=\frac{1}{p}\simeq 2.000007629$, where $X$ is the random variable that defines this geometric distribution.

\subsection{Preventing the possibility of existential forgery}\label{exfor}

Now, as attackers, we want to try to sign any message, regardless of whether its content is nonsense. We therefore want to fraudulently generate a pair $(M, f)$ consisting of a message $M$ and an associated signature $f = (f_1, \ldots, f_n)$. To do this, we only have the public matrices $P_i(x_1\ldots x_m),$ $1\leq i\leq n$. For now, let’s assume that we are free to choose the circulant matrix $\tilde{D}$ on which we will compute the MinRank (this will not actually be the case, but for now we will ignore that situation).

We therefore have the matrix $E=\left[\sum_{i=1}^{n}f_iP_i(x_1\ldots x_m)\right]-\tilde{D}$ and we want to ensure that the rank of $E$ is $r\leq n-\sqrt{n-1}$. To do this, we can first fix $f=(f_1\ldots f_n)$, the variables $x_1\ldots x_m$, or the values of the first row of $\tilde{D}$, which are $d_1\ldots d_n$. In terms of equations, the method of minors tells us that this is equivalent to solving $(n-r)^2=n-1$ multivariate equations with a maximum degree of $4$ if we do not fix any variables beforehand. In this case, the number of variables would be $v(n,m)=2n+m$, and the number of equations, independent of $m$, would be $e(n)=n-1$.

To avoid the cases previously considered involving an excess of variables or equations, we must have that $$e<\frac{v^2}{2}\forall n,m\in\mathbb{N}\backslash\{0\}. $$ If we impose the constraint $$v<\frac{e(e+3)}{2}$$ it immediately follows that $$m<\frac{n^2-3n-2}{2}$$ on the other hand, $v\neq\lambda e$ if $(n-1)\nmid (2n+m)$. These will therefore be the conditions we take into account when choosing the variables.

However, we can partially fix some of these variables. The number of equations remains the same, but $v(n,m)\in\{n,m,2n,m+n\}$ depending on how we fix $$\{(x_j,f_i),(x_j,d_i),(f_j,d_i),(x_j),(f_i),(d_i)\}.$$ The cases $\{(x_j,f_i),(x_j),(f_i)\}$ are ruled out since they imply that we would obtain values to reconstruct a matrix $\tilde{D}$, but we would not be able to reverse this matrix to obtain a message associated with it, given that hash functions and padding will be applied to the message $M$ to arrive at $\tilde{D}$, making it impossible to reverse the process.

This leaves the possibility of specifying $\{(x_j,d_i),(f_j,d_i),(d_j)\}$. The first case is not considered because it is, in itself, the signing process (choosing variables, a message, and obtaining a signature). The second case is very interesting, as it plays with the idea of fixing the signature and the matrix, and then obtaining matching variables that fit into the signature verification process. In this case, the system would have $e(n)=n-1$ equations and $v'(n,m)=m$ variables. It would also be quadratic, so special attention must be paid to ensure that the system is balanced between equations and variables.

First, we have that $$m=v'(n,m)<v(n,m)=2n+m,$$ so immediately $$v'(n,m)<\frac{e(e+3)}{2}$$ if the condition we previously specified is satisfied, $$m<\frac{n^2-3n-2}{2}.$$ The case $e<\frac{v'(v'+3)}{2}$ is equivalent to solving the inequality $$m^2+3m-(2n-2)>0, m, n\in\mathbb{N}$$ and we find that $$m>\frac{-3\pm\sqrt{8n+1}}{2}$$ must hold. Finally, $v' \neq \lambda e$ if $(n-1) \nmid m$.

If we fix only the entries of the matrix $\tilde{D}$, the system is of degree $3$ in the variables $d_1\ldots d_n$. Again, we have that

$$n+m=v''(n,m)<v(n,m)$$

and therefore

$$v’' < v < \frac{e(e+3)}{2}$$

if

$$m<\frac{n^2-3n-2}{2}.$$

For the case $$e<\frac{v''(v’'+3)} {2}$$ we must have $$n-1<\frac{(n+m)(n+m+3)}{2},$$ which is equivalent to $$(n+m)^2+3m+n+2>0,$$ which is true for all $n,m\in\mathbb{N}\backslash\{ 0\}$. Finally, $v'' \neq \lambda e$ if $(n-1) \nmid (n+m)$.

\subsection{Other Security Parameters of a Digital Signature}

In \cite{boneh2020graduate}, other security parameters for a digital signature are defined. On page 529, a game involving the signature is defined; if the attacker succeeds in this game, it constitutes an attack via existential forgery. If the signature withstands an attack via existential forgery, it is said to be secure, which would be the case here, provided the parameters are chosen appropriately, as discussed in the previous section \ref{exfor}.

There is a higher level of security, called a strongly secure signature, which means that an adversary cannot even sign messages that have already been signed using a new signature. This level of security can, in fact, be achieved, since the same source states that any signature scheme that is secure can be converted into a strongly secure one (Exercise 14.10, page 609).

Other types of attacks on digital signatures include message confusion and signer confusion. These methods introduce confusion regarding what was signed or who signed it, respectively, by making a single signature valid for two different messages (the authentic one and a forgery) or by allowing the attacker to generate a new public key that is valid for a previously created pair $(M,\Sigma)$, thereby causing confusion about who signed the message. Signatures that resist these two attacks are known as strongly binding signatures. Exercise 13.5 on page 576 shows that any signature scheme can be modified to become this more secure type of signature. The process is as follows.

Given a signature of the form $(G,S,V)$, where $G$ is the key generation process, $S$ is the signing method, and $V$ is the signature verification method, and given a hash function $H'$, we define a new signature $(G,S',V')$ where

$$S'(s_k,m):=\left\{h\gets H'(p_k,m),\sigma\stackrel{R}{\longleftarrow}S(s_k,m),\text{ return }(h,\sigma) \right\}$$
$$V'(p_k,m,(h,\sigma))\text{ accepted if }h=H'(p_k,m)\text{ y }V(p_k,m,\sigma)\text{ is accepted }$$

where $(p_k, s_k)$ is the public-private key pair for the signature. If $S$ is secure, the proof shows that $S'$ is also secure, and if $H'$ is collision-resistant, then $S'$ is strongly binding and therefore resistant to message confusion and signer confusion.

\subsection{Extending the Message Size Using a Hash}

To improve computational efficiency, and as described in \cite{boneh2020graduate}, in Chapter 13.2 (Extending the message space with collision-resistant hashing), we will not sign a message but rather a hash of the message, thereby improving the algorithm’s efficiency. To do this, we will fix the length of our message space $\text{M}$ and, given our signature $\text{S}=(G,S,V)$ defined over a message and signature space $(\text{M},\Sigma)$, and $H:\text{M'}\to\text{M}$ a hash function, where $\text{M'}$ is much larger than $\text{M}$, we make the following trivial modification to develop a new signature scheme $\text{S'}$:

$$S'(s_k,m):=S\left(s_k,H(m)\right)\text{ and }V'(p_k,m,\sigma):=V(p_k,H(m),\sigma)$$

This simple modification allows us to sign very long messages and to set the parameters of our signature to a value $n$ as the fixed initial message length. In this way, we do not need to worry about the initial message length when defining the final values of our signature’s security parameters. In particular, we can consider signing relatively short messages, specifically those of $256$ bits or digits. The following theorem, which appears in \cite{boneh2020graduate} on page 533, states that this procedure does not make the original signature any less secure.

\begin{thm}
    Suppose that the signature scheme $S$ is secure and that the hash function $H$ is collision-resistant. Then the associated signature $\text{S'}=(G,S',V')$, as defined above, is secure.
\end{thm}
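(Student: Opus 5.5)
The plan is a standard reduction in the existential-forgery game of \cite{boneh2020graduate}. Fix an efficient adversary $\mathcal{A}$ attacking $\text{S'}$. From it I will build two efficient adversaries: $\mathcal{B}_S$ attacking the scheme $S$, and $\mathcal{B}_H$ attacking the collision resistance of $H$. The goal is the bound
$$\mathrm{Adv}_{\text{S'}}(\mathcal{A})\leq \mathrm{Adv}_{S}(\mathcal{B}_S)+\mathrm{Adv}_{H}(\mathcal{B}_H).$$
Both terms on the right are negligible by hypothesis, so $\text{S'}$ is secure.

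The first adversary $\mathcal{B}_S$ works as follows.
\begin{enumerate}
\item It receives the public key $p_k$ of $S$ and forwards it to $\mathcal{A}$.
\item Whenever $\mathcal{A}$ queries a message $m_i\in\text{M'}$, it computes $H(m_i)$. It asks its own signing oracle for $\sigma_i\gets S(s_k,H(m_i))$ and returns $\sigma_i$. This is exactly $S'(s_k,m_i)$, so the simulation is perfect.
\item When $\mathcal{A}$ outputs a forgery $(m,\sigma)$, it outputs $(H(m),\sigma)$.
\end{enumerate}
The second adversary $\mathcal{B}_H$ runs the same simulation, but generates its own key pair with $G$ so that it can sign by itself. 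At the end it looks for an index $i$ with $H(m)=H(m_i)$ and $m\neq m_i$, and outputs the pair $(m,m_i)$.

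Next I split the event $W$ that $\mathcal{A}$ wins into two disjoint cases. In both cases $m\notin\{m_1,\ldots,m_Q\}$ and $V(p_k,H(m),\sigma)$ accepts.
\begin{itemize}
\item $W_1$: $H(m)\notin\{H(m_1),\ldots,H(m_Q)\}$. Then $H(m)$ was never submitted to the signing oracle of $S$, so $(H(m),\sigma)$ is a valid fresh forgery against $S$. Hence $\Pr[W_1]\leq\mathrm{Adv}_S(\mathcal{B}_S)$.
\item $W_2$: $H(m)=H(m_i)$ for some $i$. Since $m\neq m_i$, this is a genuine collision of $H$, found in polynomial time. Hence $\Pr[W_2]\leq\mathrm{Adv}_H(\mathcal{B}_H)$.
\end{itemize}
Adding the two bounds gives the claimed inequality.

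The main point needing care is that the simulations are perfect, so that $\mathcal{A}$'s view, and therefore the probabilities of $W_1$ and $W_2$, is the same as in the real game. For $\mathcal{B}_S$ this is immediate from the definition $S'(s_k,m)=S(s_k,H(m))$. For $\mathcal{B}_H$ it holds because it runs the genuine $G$ itself.

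A second subtlety concerns whether the hash is keyed. If $H$ is drawn from a keyed family, $\mathcal{B}_H$ must embed the challenge hash key it receives from its own game. $\mathcal{B}_S$ can sample that key itself, and it must do so before forwarding the public key to $\mathcal{A}$. I would treat this as a notational adjustment rather than a real obstacle.
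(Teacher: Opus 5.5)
Your proposal is correct and takes the same approach as the paper. The paper gives no proof of its own and instead cites the hash-and-sign theorem of Boneh and Shoup (p.~533); that proof is exactly your reduction to a forger $\mathcal{B}_S$ against $S$ and a collision finder $\mathcal{B}_H$ against $H$, split on whether $H(m)$ equals some queried $H(m_i)$, giving the bound $\mathrm{Adv}_{\text{S'}}(\mathcal{A})\leq \mathrm{Adv}_{S}(\mathcal{B}_S)+\mathrm{Adv}_{H}(\mathcal{B}_H)$.
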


\subsection{Compatibility with Padding Schemes}

From a functional perspective, this digital signature can be understood in its signing process as a function, which we have previously denoted as $S$, of the form $S:\text{M}\times Var\to\Sigma$, where $Var$ is the space of variables of the signature. Designed in this way, it is therefore amenable to the application of padding schemes, which have not been included in the design shown here so as not to obscure the explanation of the procedure. In our case, the length of the elements of $\text{M}$ matches that of $\Sigma$, and therefore padding schemes designed for encryption methods can also be used for this digital signature, such as PSS \cite{bellare1998pss} or OAEP+ \cite{shoup2001oaep}.

The underlying problem is that these methods were developed with older signature systems such as RSA in mind, and while their usefulness in improving or strengthening this signature is likely, it has not yet been formally proven; but it is also true that an additional padding scheme does not add a significant computational burden to the signing process, and it is always preferable to proceed with caution, given that changes are much more difficult to make after the fact than beforehand in a digital security scheme, that is, before it has been implemented.

\section{Selection of Digital Signature Parameters}

We will now combine all the previous results to define the parameters of the digital signature. To do so, we must remember that we need to make choices regarding the number and size of the matrices $n$, the number of variables $m$, the size of the finite field $q$, and the desired range $r$. We highlight the minimum necessary conditions that must all be met.

\begin{enumerate}
    \item Avoid variable collisions: $\boldsymbol{q^m\gg 0}$.
    \item There is no $n\times n$ discrete Fourier transform in $\mathbb{F}_q$: $\boldsymbol{n\nmid (q-1)}$.
    \item The probability of obtaining a circulant matrix on the first try is close to $(q-1)/q$: $\boldsymbol{n}$ \textbf{is prime,} $\boldsymbol{\gcd(n,q)=1}$, and $\boldsymbol{ord_n(q)=n-1}$.
    \item The value of $r$ makes cryptanalysis more difficult by increasing the number of cases to consider: $r$ should be as close as possible to $\lceil{\frac{n}{2}}\rceil$.
    \item The probability that the associated MinRank problem is well-defined is as high as possible: $\boldsymbol{r=n-\sqrt{n-1}}$, which thus constrains the value of $r$ and requires that $n$ be a prime number of the form $\boldsymbol{n=u^2+1}$, that is, the number immediately following a perfect square. Landau’s fourth problem, which remains unsolved, posits that there are infinitely many such prime numbers \cite{hardy1979introduction}. This problem is, in fact, a special case of Schinzel’s Hypothesis H, which encompasses many others, such as the existence of infinitely many twin primes \cite{schinzel1958certaines}.
    \item A direct matrix factorization approach is intractable: $$\boldsymbol{n^2(n^2+n-n\sqrt{n-1}+2m+2)^2-n^3(m+1)(m+2)>0}$$
    \item A direct matrix factorization attack after the $m$ variables have been chosen is intractable: $n>1$ and $r=n-\sqrt{n-1}$.
    \item All the attempts to achieve existential forgery fail: $\boldsymbol{(n-1)\nmid (n+m),}$ $\boldsymbol{(n-1)\nmid m,}$ $\boldsymbol{-3+\sqrt{8n+1}<2m<n^2-3n-2}$
\end{enumerate}

\section{Example of Variables}

Let’s take $m=256$ and assume we are working in the finite field with $q=2$, that is, $\mathbb{F}_q=\mathbb{Z}_2$. We have that $q^m\gg 0$. Since $n$ must be prime (and therefore odd, since it must be large), we automatically have that $\gcd(n,q)=1$ and that $n\nmid (q-1)$. For the inequalities in sections 6 and 8 to be satisfied simultaneously, $n\in[34;33152]$ must hold, so we must find an $n$ large enough within that interval.

We must now require that $n$ be a prime of the form $n=u^2+1$, that $ord_n(2)=n-1$, and that $(n-1)\nmid 256$ and $(n-1)\nmid (n+256)$. The prime numbers of the form $n=u^2+1$ (near-square primes) that lie in the interval $[34;33152]$ belong to the set

\begin{gather}    NSP(33,33152)=\{37,101,197,257,401,577,677,1297,1601,2917,3137,4357, \nonumber \\     5477,7057,8101,8837,12101,13457,14401,15377,15877,16901,17957,21317, \nonumber \\    22501,24337,25601,28901,30977,32401\}. \nonumber 
\end{gather}

Of all these values, those that satisfy the condition $ord_n(2)=n-1$ are

$$\{37,101,197,677,4357,5477,8837,16901,17957,21317,22501\}$$

 and all of them satisfy $(n-1)\nmid 256$, $(n-1)\nmid (n+256)$. Therefore, all of them meet the security standards described above, and our only criterion when choosing one value over another would be the size of the keys and the balance between the speed of the procedure and the difficulty of solving the MinRank problem by brute force. The rank associated with each of them is $r=n-\sqrt{n-1}$, which is always a positive integer.

Let’s now consider a different example, and suppose we want a much larger finite field, such as $\mathbb{F}_{256}$. This finite field is the one used in the AES scheme, as described in \cite{DaemenRijmen2002}, and naturally associates each element of a finite field with a byte, since $256=2^8$.

We have $q^m=2^{8m}$, which will be sufficiently large even for relatively small values of $m$. $n$ must satisfy $n\nmid 255$, and since it must also be a prime number, this implies that it must be distinct from $3$, $5$, and $17$, which are the prime factors of $255$. Furthermore, since $\gcd(n,256)=1$, it suffices for $n$ to be odd.

It is easy to show that for the inequality in Section 8 to hold, it suffices that $$n\in\left(\frac{3+\sqrt{17+8m}}{2},\frac{m^2}{2}+\frac{3m}{2}+1\right)$$

If we take, for example, $m=20$, then $n\in(8,231)$. Of these, the primes of the form $n=u^2+1$ are $$n\in\{17,37,101,197\}$$

It is also easy to verify that the inequality in Section 6 is always true for all $n$ if $m\leq 57$. From the above values, we must exclude $17$ because it is a divisor of $255$, and $37$ because $255$ is not a primitive root in $\mathbb{Z}_{37}$. In the case of $101$ and $197$, $255$ is indeed a primitive root in $\mathbb{Z}_{101}$ and $\mathbb{Z}_{197}$. Since both values also satisfy $(n-1)\nmid m$ and $(n-1)\nmid(m+n)$, our two pairs of parameters are $q=256, m=20, n=101, r=91$ and $q=256, m=20, n=197, r=183$.

\bibliographystyle{plain}
\bibliography{bibliography}

\end{document}